\theoremstyle{plain}
\newtheorem{mainthm}{Theorem}
\newtheorem{mainclly}[mainthm]{Corollary}
\newtheorem{theorem}{Theorem}[section]
\newtheorem{example}{Example}
\newtheorem{proposition}[theorem]{Proposition}
\newtheorem{definition}[theorem]{Definition}
\newcommand{\T}{\mathbb{T}}
\newcommand{\al} {\alpha}       
\newcommand{\ga} {\gamma}    
\newcommand{\de} {\delta}
\newcommand{\Z}{\mathbb{Z}}
\newcommand{\N}{\mathbb{N}}
\newcommand{\R}{\mathbb{R}}
\newcommand{\eps}{\varepsilon}
\newcommand{\vphi}{\varphi}
\title{Expansive Lie Group Actions}
\author{Alexander Arbieto}
\address{Instituto de Matem\'atica, Universidade Federal do Rio de Janeiro, P. O. Box 68530, 21945-970 Rio de Janeiro, Brazil.}
\email{arbieto@im.ufrj.br}
\author{Elias Rego}
\address{Department of Mathematics, Southern University of Science and Technology, Shenzhen, Guangdong, China.}
\email{rego@sustech.edu.cn}
\thanks{Primary MSC code: 37B05, secundary MSC code:37B40. Keywords: Lie group actions, expansiveness,  entropy. 
 A. A. was partially supported by CNPq, FAPERJ and
PRONEX/DS from Brazil.}
\date{\today}
\begin{document}

\maketitle

\begin{abstract}
In this work we introduce a concept of expansiveness for actions of connected Lie groups. We study some of its properties and investigate some implications of expansiveness. We study the centralizer of expansive actions and introduce  $CW$-expansiveness for pseudo-group actions. As an application, we prove positiveness of geometric entropy for expansive foliations and expansive group actions. 
\end{abstract}

\section{Introduction}
 
 Expansiveness is a well established dynamical property. It raised from the smooth theory of dynamical systems, but very soon it was realized to be a feature of topological nature. W. R. Utz was the first to define expansiveness in the 50's in the setting of homeomorphims. Later, expansive systems were proved to be a huge source of rich dynamics. Indeed, they are closely related to stability phenomena,  chaos and entropy theory,  just to give a few examples. 
 
 Although expansiveness is a widely studied topic in discrete-time dynamics, the theory presents difficulties when one tries to extend it to more general contexts. For instance, if we try to move to continuous-time systems, we can face many obstructions. Indeed, the definition of expansiveness for homeomorphisms is not suitable to deal with flows, since there are not non-trivial flows satisfying it(\cite{BW}). This fact together with the necessity of dealing with reparametrizations made it appear many distinct definitions of expansiveness for flows. 
 
 The first one is due to R. Bowen and P. Walters in \cite{BW} (in this work we call it $BW$-expansiveness). Even though its great success to describe expansive phenomena for non-singular flows, $BW$-expansiveness has shown to be an inappropriate tool to deal with flows presenting singularities accumulated by regular orbits, such as the Lorenz Attractor. Later,  other definitions of expansiveness emerged to overcome these obstacles. We can mention $k^*$-expansiveness, separating flows, kinematic expansiveness, geometric expansiveness and others (See \cite{Ar} for details).
 
 Our goal in this paper is to extend the definition of expansiveness for  more general systems. Namely, we consider actions of more general groups on compact metric spaces and investigate how the expansive behaviour rules their dynamics. 
 
 There are some known efforts in order to study expansive group actions. In \cite{Hur},\cite{BDS} and \cite{RV}, the authors studied expansive actions of  finitely generated groups. In \cite{BRV} W. Bonomo, J. Rocha and  P. Varandas  introduced  expansive $\R^k$-actions and studied their centralizers. Here we consider expansiveness for actions of more general Lie groups. There are substantial distinctions between the flow and the Lie group actions scenarios. Maybe the simplest one is that $BW$-expansive flows on manifolds are non-singular and this implies the existence of regular orbit foliations, but for actions of more general groups this is not always the case. Indeed, in section 2 we see that even expansive actions do not have fixed points, they may not be locally-free. Previous fact implies that the orbits of an expansive action may not generate a regular foliation of the phase space. To avoid singular orbit foliations, in this work we only consider locally-free actions. We point out that the precise definitions of the concepts appearing in this introduction  are postponed until section 2.

A question we address in this work is what scenarios allow the existence of expansive actions. We start recalling that if we are considering flows, then it is proved in  \cite{HS} that expansiveness cannot occur in closed surfaces. We investigate this result for general groups. In the general setting, the surface flows case is  translated into the codimension one actions case. By a codimension one action, we mean a $G$-action, $\varPhi$, on $M$ satisfying $codim(\varPhi)=\dim(M)-\dim(G)=1$. In this scenario, we obtain the following result.

\begin{mainthm}\label{EC}
   Let $M$ be a closed Riemannian manifold and let $\varPhi$ be a locally-free $C^2$-action of a nilpotent and connected Lie group $G$ on $M$. If $codim(\varPhi)=1$, then $\varPhi$ cannot be expansive. 
\end{mainthm}

   A celebrated  way to measure the complexity of a dynamical system is its topological entropy. It is well known that for homeomorphism and flows the expansiveness property is a source of complex dynamical behavior. For instance,  we cite the works \cite{Fa} and \cite{Ka}, where it is  proved that any expansive homeomorphism must have positive topological entropy, if the phase is space is rich enough. A similar result was proved for non-singular expansive flows in \cite{ACP}. Here we address the problem of the positiveness of entropy for expansive group actions, but our treatment considers the geometric entropy introduce by E. Ghys, R. Langevin and P. Walczak in \cite{GLW}, instead of topological entropy. This is because the geometric entropy is a version of entropy suitable  to deal with actions. In order to obtain our results, we chose to start working in a more general setting. We begin working with actions of pseudo-groups of local-homeomorphisms on compact metric spaces which are a generalization of continuous group actions on compact metric spaces. In addition, we introduce a weaker form of expansiveness for such actions, the so called $CW$-expansiveness (see section 3, for precise definition). We remark that expansive pseudo-groups are $CW$-expansive. In this general setting, we obtain the following result:

\begin{mainthm}\label{EntPG}
	Let $M$ be a compact, infinite and locally connected metric space. Suppose that $\mathcal{G}$ is a finitely generated pseudo-group of local homeomorphims of $M$. If $\mathcal{G}$ is $CW$-expansive, then it has positive geometric entropy. 
 \end{mainthm}
 
 Now suppose that  $M$ is a closed smooth manifold. Recall that if $\mathcal{F}$ is a regular foliation of $M$, then it has a natural associated pseudo-group of local homeomorphims, the so called holonomy pseudo-group of $\mathcal{F}$. In \cite{IT} T. Inaba and N. Tsuchiya introduced a concept of expansiveness for foliations. Precisely, a foliation is expansive, if its holonomy pseudo-group is expansive. In addition they proved that any expansive codimension-one foliation has positive geometric entropy. In higher condimension, they obtained positiveness of entropy under stronger expansive assumptions. We say that a foliation is  $CW$-expansive if its holonomy pseudo-group is $CW$-expansive. In particular, the foliations introduced in \cite{IT} are $CW$-expansive. As a consequence of Theorem \ref{EntPG} we obtain the following:

 \begin{mainclly}\label{EntFol}
If $\mathcal{F}$ is a $CW-$expansive $C^1$-foliation with positive codimension, then $\mathcal{F}$ has positive geometric entropy.
\end{mainclly}
 
We remark that the above result implies the results in \cite{IT} in any positive codimension and without stronger expansiveness assumptions.
 
Let us now come back to the group actions scenario. Suppose that $G$ is a connected Lie Group and $\varPhi$ is a $C^r$-action of $G$ on $M$. It is well known that  if the action is locally-free, then the set of orbits of $\varPhi$ generates a regular foliation of $M$. In this setting we can apply Corollary \ref{EntFol} to group actions and obtain entropy results.   
 
\begin{mainclly}\label{ExpEnt}
Let $\varPhi$ be a locally-free $C^1$-action of a connected Lie group  on a closed manifold such that $codim(\varPhi)>0$. If $\varPhi$ is expansive, then $\varPhi$ has positive geometric entropy.
\end{mainclly}

If $G=\mathbb{R}$, then $\varPhi$ is in fact a flow. In that case, our definition of expansiveness, implies $BW$-expansiveness. On the other hand, $BW$-expansive $C^r$-flows on closed manifolds, must be regular. This implies that their orbits generates an expansive foliation of the phase space. Recall that in this case, we have that the geometric entropy of the orbit foliation of $\varPhi$ is twice the topological entropy of the flow $\varPhi$, if we reparametrize $\varPhi$ with unitary speed. In addtion, the positiveness of the  topological entropy of a regular flow is invariant under  reparametrizations, then our results also have as consequence the result in \cite{ACP}.

We also studied the symmetries of expansive actions. There are many efforts in the direction of understanding the symmetries of $C^r$-actions. We reefer the reader to the works of D. Obata, M. Leguil and B. Santiago for actions of $\Z$ and $\R$ (\cite{LOS} and \cite{O}). In the setting of expansive homeomorphisms P. Walters proved in \cite{W} that such systems have discrete centralizer.  In \cite{BRV} the authors proved that expansive $\R^k$-actions have quasi-trivial centralizers. Here we  extend these results,
but in order to do it, we need an additional hypothesis on $G$. Recall that if $G$ is a Lie group, then its Lie algebra $\mathfrak{G}$ is isomorphic to $T_eG$, where $e$ is the identity element of $G$. So we can see $\mathfrak{G}$ as an  additive group.

\begin{mainthm}\label{CC}
Let $\varPhi$ be  an expansive $C^1$-action of a group $G$ on a closed manifold and let $\mathfrak{G}$ be the Lie algebra of $G$. If the exponential map $\exp:\mathfrak{G}\to G$  is a group-homomorphism, then $\varPhi$ has quasi-trivial centralizer. 
\end{mainthm}

\vspace{0.1in}
\textit{The Finitely Generated Case}
\vspace{0.1in}

We also investigate some consequences of expansiveness in the case where $G$ is a finitely generated group. Recall that a continuous group action on $M$  can be seen as a pseudo-group action of $M$. With this interpretation, the following corollary is  another consequence of Theorem \ref{EntPG}.

\begin{mainclly}\label{FG}
	Let $M$ be a infinite, compact and locally connected metric space. Suppose that $\varPhi$ is a  continuous action of a finitely generated  group $G$ of homeomormphisms of $M$. If $\varPhi$ is $CW$-expansive, then it has positive geometric entropy. 
 \end{mainclly}

This result generalizes the results in \cite{Hur} for expansive actions of the circle. In addition, if $G=\mathbb{Z}$, then $\varPhi$ is the action of a single homeomorphism on $M$. In this case we have that the geometric entropy of $f$ is twice its topological entropy. As a consequence, Corollary \ref{FG} implies the classical results in \cite{Ka} and \cite{Fa}. 
Finally, we address the question of the centralizer of expansive actions in the finitely generated case. In this setting, we obtained the following generalization of the results in \cite{B}.

\begin{mainthm}\label{DC}
The centralizer of any expansive $C^0$-action of a finitely generated group G on a closed manifold is a discrete set on the space of $C^0$-actions of $G$ on $M$.

\end{mainthm}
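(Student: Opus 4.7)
The plan is to adapt the classical argument of Walters for expansive homeomorphisms to the finitely generated setting. Fix a finite generating set $\{g_1, \ldots, g_k\}$ of $G$ and let $c > 0$ be an expansivity constant for $\varPhi$, so that whenever $x \neq y$ in $M$ there exists $h \in G$ with $d(\varPhi(h)(x), \varPhi(h)(y)) > c$. Topologize the space of $C^0$-actions of $G$ on $M$ by declaring two actions close when their values on every generator are close in the uniform topology on $\operatorname{Homeo}(M)$; recall that on the compact manifold $M$ this uniform topology makes $\operatorname{Homeo}(M)$ a topological group, so composition and inversion are continuous.

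The heart of the proof is a Walters-type lemma: any $\alpha \in \operatorname{Homeo}(M)$ that commutes with every $\varPhi(h)$, $h \in G$, and satisfies $d_{C^0}(\alpha, \operatorname{id}_M) < c$ must coincide with $\operatorname{id}_M$. To see this, fix $x \in M$, set $y = \alpha(x)$, and use the commutation relation $\alpha \circ \varPhi(h) = \varPhi(h) \circ \alpha$ to write, for every $h \in G$,
\[
d(\varPhi(h)(x), \varPhi(h)(y)) = d(\varPhi(h)(x), \alpha(\varPhi(h)(x))) \leq d_{C^0}(\alpha, \operatorname{id}_M) < c.
\]
The expansivity of $\varPhi$ then forces $y = x$, hence $\alpha = \operatorname{id}_M$.

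To conclude, pick any $\varPsi_0$ in the centralizer of $\varPhi$. By the continuity of composition and inversion in $\operatorname{Homeo}(M)$, there is a $C^0$-neighborhood $\mathcal{U}$ of $\varPsi_0$ such that for every $\varPsi \in \mathcal{U}$ the homeomorphism $\alpha_i := \varPsi(g_i) \circ \varPsi_0(g_i)^{-1}$ satisfies $d_{C^0}(\alpha_i, \operatorname{id}_M) < c$ for each $i = 1, \ldots, k$. If in addition $\varPsi$ centralizes $\varPhi$, then both $\varPsi(g_i)$ and $\varPsi_0(g_i)^{-1}$ commute with each $\varPhi(h)$, so $\alpha_i$ does too; the lemma yields $\alpha_i = \operatorname{id}_M$, that is, $\varPsi(g_i) = \varPsi_0(g_i)$ for every generator. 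Since an action is determined by its restriction to a generating set, $\varPsi = \varPsi_0$, which shows that $\varPsi_0$ is isolated in the centralizer.

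The main conceptual obstacle is pinning down a notion of expansivity for actions of $G$ strong enough that the orbit comparison $d(\varPhi(h)(x), \varPhi(h)(\alpha(x)))$ in the key lemma is controlled uniformly in $h$; once expansivity is taken in the uniform sense indicated above, the rest of the argument reduces to standard continuity considerations in the topological group $\operatorname{Homeo}(M)$ and the fact that $G$ admits a finite generating set.
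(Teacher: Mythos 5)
Your proof is correct and follows essentially the same route as the paper: both arguments reduce to the observation that if $\alpha=\varPsi_g\circ\varPsi_0(g)^{-1}$ commutes with every $\varPhi_h$ and is uniformly $c$-close to the identity, then $d(\varPhi_h(x),\varPhi_h(\alpha(x)))=d(\varPhi_h(x),\alpha(\varPhi_h(x)))<c$ for all $h$, which expansiveness forbids unless $\alpha=\operatorname{id}_M$. Your packaging via the Walters-type lemma and the quotient homeomorphisms $\alpha_i$ is only a cosmetic reorganization of the paper's direct comparison of $\varPsi_g(x)$ and $\varPsi'_g(x)$, and the uniform notion of expansivity you worry about at the end is exactly the definition the paper adopts for finitely generated groups.
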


This paper is divided as follows:
\begin{itemize}
    \item In section 2 we define the basic concepts needed to study group actions. We define expansiveness and explain the difference between the definitions for finitely generated and connected groups. We  also study some of the basic properties of expansive actions of connected Lie groups and prove Theorem \ref{EC}

\item In section 3 we  study the entropy problem for expansive actions. We introduce the concept of $CW$-expansiveness for pseudo-groups and prove Theorem \ref{EntPG}, Corollary \ref{EntFol} and Corollary \ref{ExpEnt}.

\item In section 4 we study the centralizer of expansive actions of connected groups and prove Theorem \ref{CC}.

\item In section 5 we  study in more detail the finitely generated case. We give some examples and prove Theorem \ref{DC}

\end{itemize}
\section{Preliminaires}

In this section we define the basic concepts needed to the study of group actions and the concept of expansiveness.

\vspace{0.1in}
\textit{Group Actions.}
\vspace{0.1in}

 Throughout this work  $M$ denotes a compact metric space with a metric $d$. Some times we assume that $M$ is a Riemannian manifold, in this case $d$ denotes the metric induced by the Riemannian metric of $M$. Let us denote $\mathcal{K}(M)$ 
for the compact  hyperspace of $M$, i.e., 
 $$\mathcal{K}(M)=\{K\subset M; K\textrm{ is compact}\} $$
We also denotes $\mathcal{C}(M)$ for the continuuum hyperspace of $M$, i.e.,
$$\mathcal{C}(M)=\{C\subset M; C\textrm{ is  compact and connected}\}.$$
Recall that both $\mathcal{K}(M)$ and $\mathcal{C}(M)$ can be made into compact metric spaces through the Hausdorff Metric
$$d^H(A,B)=\max\{\sup\limits_{x\in A}d(x,B),\sup_{x\in B}d(x,A)  \}.$$

Let $G$ be a Lie group of dimension $k$ and  endow $G$ with a right invariant metric  $d^G$. We save $|g-h|$ to denote $d^G(g,h)$ and  $|g|$ to denote the distance from $g$ to the identity element $e$ of $G$.  We  denote  $End(G)$ for the set of group endomorphisms of $G$, i.e., the set of group homomorphisms $h:G\to G$. Recall that to any Lie group $G$ we can associate a Lie algebra $\mathfrak{G}$ which is isomorphic to $T_eM$.  

\begin{definition}
 An action of $G$ on $M$ ($G$-action on $M$) is a map $\varPhi: G\times M\to M$ satisfying:
\begin{itemize}
\item $\varPhi(e,\cdot)=Id_M$  
\item $\varPhi(hg,x)=\varPhi(h,\varPhi(g,x))$.
\end{itemize}
We say that a $G$-action on $M$ is continuous if the map $\varPhi$ is continuous. If $M$ is a smooth manifold, $r\geq 1$ and the map $\varPhi$ has regularity $C^r$, we say that $\varPhi$ is a $C^r$-action. Let us denote $\mathcal{A}^0(G,M)$ for the set of continuous $G$-actions on $M$ and $\mathcal{A}^r(G,M)$ for the set of $C^r$-actions of $G$ on $M$. 

\end{definition}

 As usual, we use the notation $gx:=\varPhi(g,x)$ and $\phi_g(\cdot)=\varPhi(g,\cdot)$. In addition,  $B_{r}(x)$, $B_{r}^G(x)$ and $B_r^{\mathfrak{G}}(x)$ will stand for the open balls of $M$, $G$ and $\mathfrak{G}$, respectively.

\vspace{0.1in}
\textit{Foliations.}
\vspace{0.1in}

 Next we recall the concept of foliations for smooth manifolds and some of their properties. We refer the reader to \cite{Wa} for a complete exposition of the concepts and results presented here.
\begin{definition}
   Let $M$ be a smooth manifold. A p-dimensional $C^r$ foliation $\mathcal{F}$ is a decomposition of $M$ in to connected $C^r$-manifolds (the leaves of $\mathcal{F}$) such that for any $x\in M$, there is a $C^r$-differentiable chart $\xi=(\xi',\xi''):U\to \R^d=\R^p\times \R^q$ defined on a neighborhood of $U$ and satisfying:
   
   (*) - For any leaf $L$, each connected component of   $L\cap U$ (called plaque) satisfies $\xi''=c$, where $c$ is constant
   
\end{definition}

Charts satisfying (*) are called distinguished charts. An atlas constituted by distinguished charts is called a foliated atlas. The number $q$ in the previous definition is the codimension of $\mathcal{F}$. The topology of the leaves as manifolds is in general stronger than that induced by the topology of $M$. We say that a leaf $L$ is proper if these two topologies coincide. 

\textit{Remark:} Compact leafs are always proper, but the converse does not hold.

\begin{definition}
   A foliated atlas $\mathcal{A}=$ is nice if it satisfies:
   
   \begin{itemize}
       \item The covering $\{U_{\xi}\}_{\xi \in \mathcal{A}}$ is locally finite.
       
       \item For any $\xi\in \mathcal{A}$, we have that $\xi(U_{\xi})$ is an open cube of $\R^d$
       \item If $\xi_1,\xi_2\in \mathcal{A}$ and $U_{\xi_1}\cap U_{\xi_2}\neq \emptyset$, then there exists a distinguished chart $\xi_3\in \mathcal{A}$  such that $U_{\xi_3}$ contains the closure of $U_{\xi_1}\cup U_{\xi_2}$ and $\xi_3|_{U_{\xi_1}}=\xi_1$.
   \end{itemize}
\end{definition}

\begin{theorem}
There always exist nice atlases for a foliation $\mathcal{F}$. 
\end{theorem}

Let $\mathcal{F}$ be a foliation of $M$. If $\mathcal{A}$ is a nice atlas for $\mathcal{F}$,  We call $\mathcal{U}=\{U_{\xi}\}_{\xi\in\mathcal{A}}$ a nice covering for $M$. For every $U\in \mathcal{U}$, define in $U$  the following equivalence relation:
$$x\sim y \textrm{ if, and only if } x \textrm{ and } y \textrm{ are in the same plaque of } U.$$
In this way Then $T_U=\faktor{U}{\sim}$ can be equipped with the quotient topology.  $T_U$ is called the set of the plaques of $U$ and $T_U$ is $C^r$-diffeomorphic to an open cube of $\R^q$, where $q$ is the codimention of $\mathcal{F}$ and $r$ is the regularity of $\mathcal{F}$. Furthermore, $T_U$ can be immersed in $M$ as a manifold transverse to every leaf of $U$. By this reason, we will often call $T_U$ a local transversal to $\mathcal{F}$ and we will always see it as an immersed manifold in $M$     

\begin{definition}
   The disjoint union $$\mathcal{T}=\bigcup_{U\in \mathcal{U}}T_U$$
is a complete transversal for $\mathcal{F}$.
\end{definition}

\vspace{0.1in}
\textit{Expansiveness.}
\vspace{0.1in}

Now we  discuss some aspects of expansiveness in distinct contexts. As we will see, expansiveness must be defined in different ways depending on the kind of system in consideration.  The notion of expansiveness was introduced by Utz in his seminal work \cite{U}. At that time, expansive systems were called unstable systems and later the current denomination was established. Essentially, expansiveness is a property which separates any pair of  distinct points  by a uniform constant in some instant of time.

\vspace{0.1in}

\begin{definition}
 A homeomorphism $f:M\to M$ is expansive if there exists a constant $e\delta>0$ such that for any two distinct points $x,y\in M$, we can find $n\in \Z$ such that $d(f^n(x),f^n(y))>\delta$.    

\end{definition}

This concept was defined for flows by R. Bowen and  P. Walters in \cite{BW}, but with some distinctions. Recall that a flow is an $\R$-action on $M$.

\vspace{0,1in}

\begin{definition}
    
We say that a continuous flow $\phi$ on $M$ is  BW-expansive if for any $\eps>0$ there is some  $\delta>0$ such that if there exist $x,y\in M$ and a continuous function $h:\R\to \R$  such that $h(0)=0$ and  $d(\phi_t(x),\phi_{h(t)}(y))<\delta$ for every $t\in \R$, then there exists $t_0\in \R$ with $|t_0|<\eps$ such that $y=\phi_{t_0}(x)$.

\end{definition}

Let us explain the distinction between the above definitions. First, the connectedness of $\R$  together with the group property of $\phi$ implies the non-existence of a non-trivial flow satisfying a definition of expansiveness  similar to  the one for homeomorphisms (see \cite{BW}). The second reason is the necessity to deal with time-changes in continuous-time systems. In view of the above distinctions, if one wants to define expansiveness for more general group actions, then one must be careful about the kind of structure the acting group has. Because of this,  we shall divide our study in two cases, namely, finitely generated groups and connected Lie groups.

For the finitely generated case, a concept of expansiveness is already well established (see  \cite{Hur} , \cite{BDS} and \cite{RV}). It is a  transportation of Utz's definition to the context of group actions. Precisely, we have the following:
\vspace{0.1in}

\begin{definition}\label{expdiscdef}Let $G$ be a finitely generated group. A  continuous $G$-action on $M$ is said to be expansive if there exists a constant $\delta>0$ such that for any two distinct points $x,y\in M$, we can find $g\in G$ such that $d(gx,gy)>\delta$.

\end{definition}    

 Later, a definition for $\R^d$-actions was given in \cite{BRV}. Their definition is a natural extension of $BW$-expansive flows.  Next we present our definition of expansiveness for actions of connected Lie groups,  extending the definition in \cite{BRV}.  

\begin{definition}\label{expcontdef}
Let $G$ be a connected Lie group. A continuous $G$-action on $M$ is said to be expansive if for any $\eps>0$ there exists $\delta>0$ such that the following holds: 

If there exist $x,y\in M$ and a continuous map $h:G\to G$  fixing the identity  element of $G$ and verifying $d(gx,h(g)y)<\delta$ for every $g\in G$, then there exists $g_0\in G$ with $|g_0|<\eps$ such that $y=g_0x$.
\end{definition}

We notice that our definition is more general to the one presented in \cite{BRV}, since that definition is the same as ours, but restricting $G$ to be $\R^d$. Our first result states that expansiveness is a dynamical property. Recall that two $G$-actions $\varPhi$ and $\varPsi$ on  $M$ and $N$, respectively,  are conjugated if there exists a homeomorphism $f:N\to M$ satisfying the following conjugacy equation, for any $g\in G$: $$\varPhi_g\circ f=f\circ \varPsi_g$$   

\begin{theorem}
	Any action conjugated to an expansive action is expansive.
\end{theorem}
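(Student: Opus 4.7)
The plan is to transport data through the conjugacy, using the uniform continuity of the conjugating homeomorphism (which is automatic because the phase spaces are compact metric spaces). Throughout, let $\varPhi$ be an expansive $G$-action on $M$ and $\varPsi$ a $G$-action on $N$, and let $\psi:M\to N$ be a conjugacy, so that $\psi\circ\varPhi_g=\varPsi_g\circ\psi$ for every $g\in G$. I rename the ``$h$'' appearing in the definition of expansiveness to $\eta$ to avoid a clash with standard notation for the conjugacy.

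\textbf{Step 1: Translate the $\eps$.} Given $\eps>0$, apply the expansiveness of $\varPhi$ to obtain a number $\delta_0=\delta_0(\eps)>0$ such that whenever $d_M(gp,\eta(g)q)<\delta_0$ for all $g\in G$ (for some $p,q\in M$ and some continuous $\eta:G\to G$ with $\eta(e)=e$), one has $q=g_0 p$ for some $g_0\in G$ with $|g_0|<\eps$.

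\textbf{Step 2: Choose $\delta$ via uniform continuity.} Since $N$ is compact and $\psi^{-1}:N\to M$ is continuous, it is uniformly continuous. Pick $\delta=\delta(\eps)>0$ such that $d_N(a,b)<\delta$ implies $d_M(\psi^{-1}(a),\psi^{-1}(b))<\delta_0$.

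\textbf{Step 3: Pull back a tracking pair.} Suppose $x',y'\in N$ and $\eta:G\to G$ continuous with $\eta(e)=e$ satisfy $d_N(\varPsi_g(x'),\varPsi_{\eta(g)}(y'))<\delta$ for every $g\in G$. Set $x:=\psi^{-1}(x')$ and $y:=\psi^{-1}(y')$. Using the conjugacy identity one has $\psi^{-1}\circ\varPsi_g=\varPhi_g\circ\psi^{-1}$, hence
\[
d_M\bigl(\varPhi_g(x),\varPhi_{\eta(g)}(y)\bigr)=d_M\bigl(\psi^{-1}(\varPsi_g(x')),\psi^{-1}(\varPsi_{\eta(g)}(y'))\bigr)<\delta_0
\]
for every $g\in G$, by the choice of $\delta$.

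\textbf{Step 4: Conclude.} By Step 1 there exists $g_0\in G$ with $|g_0|<\eps$ and $y=\varPhi_{g_0}(x)$. Applying $\psi$ and using the conjugacy identity again yields $y'=\psi(\varPhi_{g_0}(\psi^{-1}(x')))=\varPsi_{g_0}(x')$, which is exactly what expansiveness of $\varPsi$ demands. Hence $\varPsi$ is expansive with the same $\eps\mapsto\delta(\eps)$ modulus (up to the uniform continuity correction).

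The argument is essentially bookkeeping and carries no real obstacle: the one thing to watch is that the function $\eta:G\to G$ does not need to be modified at all, because the conjugacy intertwines the two actions for each fixed $g\in G$, so the same reparametrization transfers back and forth. Compactness of $M$ and $N$ (part of the standing hypothesis on the phase spaces) is what makes the uniform continuity step in Step 2 automatic; without it one would need to impose uniform continuity of $\psi^{\pm 1}$ as an extra hypothesis.
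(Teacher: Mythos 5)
Your argument is correct and is essentially identical to the paper's own proof: both pull the tracking pair back through the (uniformly continuous) inverse of the conjugacy to get a $\delta_0$-tracking pair for the expansive action, apply expansiveness there, and push the resulting $g_0$ forward again. The only cosmetic difference is the direction in which the conjugating homeomorphism is written.
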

\begin{proof}
	Suppose that $\varPhi$ is an expansive $G$-action on $M$ which is conjugated to a $G$-action, $\varPsi$, on $N$ and fix $\eps>0$. Let $\de_1>0$ be given by the expansivity of $\varPhi$. Let $f:N\to M$ be the conjugacy homeomorphism. Then one can choose $\de>0$  such that $d(f(x),f(y))<\de_1$ if $d(x,y)<\de$ for every $x,y\in N$. We claim that $\de$ is an expansivity constant of $\varPsi$. Indeed, if there are $x,y\in N$ and  continuous map $h:G\to G$ fixing $e$ and verifying   $d(\varPsi_g(x),\varPsi_{h(g)}(y))<\de$ for every $g\in G$, then $d(\varPhi_g(f(x)),\varPhi_{h(g)}(f(y)))<\de_1$ for every $g$. Then $f(y)\in\varPsi(B_{\eps}(e),f(x))$ and therefore $y\in\varPsi(B_{\eps}(e),x)$.  This completes the proof.  
\end{proof}

The following proposition  implies the non-existence of fixed points for expansive actions on connected metric spaces and its proof is exactly the same as in the case of flows. 

\begin{proposition}\label{PFF}
 If  $\Phi$ is expansive, then any fixed point of $\Phi$ is a isolated point of $M$.
\end{proposition}

\begin{proof}

Suppose that $p$ is fixed for $\Phi$. Take $\eps>0$ and let $\de>0$ be given by the expansiveness. If we take the constant map $h\equiv e$, then $d(gp,h(g)y)=d(p,y)<\de$ for any $g\in G$ and $y\in B_{\de}(p)$. Expansiveness implies that $y=p$ and then $B_{\de}(p)=\{p\}$ and $p$ is isolated.
\end{proof}

Previous proposition implies that the Definition \ref{expcontdef} is, in fact, distinct from Definition\ref{expdiscdef}. Indeed, there are expansive homeomorphisms in sense of Definition \ref{expdiscdef} with fixed points on connected spaces, but this forbids these systems to satisfy Definition \ref{expcontdef}. This endorses the necessity of dividing the study of expansive  actions  in the finitely generated and connected cases.

Hereafter, we treat the case of connected Lie group actions and postpone the discussion about the finitely generated case until the last section.
 Henceforth we are under the following assumptions, unless otherwise stated:
 
 \begin{itemize} 
 \item $G$  denotes a connected Lie group. 
 \item $M$  denotes a compact, connected and boundaryless  Riemannian manifold
 \item All the  $G$-actions considered here are of class $C^1$.
 \end{itemize}
 
 Our first  result states that compact groups cannot act expansively.

\begin{theorem}
Let $G$ be a compact group. If $\varPhi$ is continuous and expansive, then $M$ is a single orbit.
\end{theorem}

\begin{proof}
Suppose $G$ is compact and $M$ is not a single orbit. Since $G$ is compact, then  $diam(G)$ is finite and therefore we can find $R>0$ such that $B^G_R(e)=G$. If we fix $\eps>0$, by continuity we can find some $\de>0$ such that if $d(x,y)\leq \de$, then $d(gx,gy)<\eps$, for any $|g|\leq R$. Since $M$ is not a single orbit, we can find $x\in M$ and $y\in M\setminus(O(x))$ satisfying $d(x,y)\leq \de$. But since $G=B_R(e)$, this an obstruction for expansiveness because we have that $d(gx,gy)\leq \eps$ for any $g\in G$.

\end{proof}

 Actions of connected Lie groups are closely related with foliations theory. Note that Proposition \ref{PFF} implies that the set of orbits of any expansive $C^r$-flow forms  a natural $C^r$-foliation $\mathcal{F}_{\varPhi}$ of $M$  satisfying $\dim(\mathcal{F}_{\varPhi})=\dim(G)$. For a higher dimensional group $G$, the previous condition is equivalent to the action to  be locally free, i.e., for any $x\in M$, its isotropy subgroup $G_x\subset G$ is discrete. Here the isotropy subgroup of $x$ is defined to be the set $$G_x=\{g\in G; gx=x\}. $$  
It is a classical fact that an action is locally-free if, and only if, its orbits form a foliation with the same dimension as $G$.

\begin{example}One can obtain an example of a locally-free $\R^2$-action through the suspension process described in \cite{BRV}. Indeed, if one considers the diffeomorphisms $f_A$ and $f_B$ on $\mathbb{T}^3$ induced by the  hyperbolic matrices $$A=\begin{pmatrix}3&2&1\\2&2&1\\1&1&1\end{pmatrix} \textrm{ and } B=\begin{pmatrix}2&1&1\\1&2&0\\1&0&1\end{pmatrix},$$   it is easy to see that $f_A$ and $f_B$ commute. Moreover, $f_A$ and $f_B$ are expansive, since they are anosov diffeormorpims. $f_A$ and $f_B$ generates an expansive  $\mathbb{Z}^2$-action on $\mathbb{T}^3$ that can be suspended to a locally-free $\mathbb{R}^2$-action on a manifold $M$ with dimension $5$. According the results in \cite{BRV}, this action must be expansive.  
\end{example}

On the other hand, for groups distinct from $\mathbb{R}$,  the non-existence of fixed points is not enough to guarantee  that the action is locally free. Next example illustrates this fact.

\begin{example}
Let $M$ be a smooth closed manifold and $X$ be a $C^r$-vector field generating a $BW$-expansive flow with non-trivial centralizer. Let $Y$ be a non-trivial vector filed commuting with $X$. By Corollary 2 of \cite{BRV} the centralizer of $M$ is quasi-trivial.  Then $Y$ generates a flow with the same orbits of $X$. Then the $\R^2$-action generated by $X$ and $Y$ is an expansive action with orbits of dimension one and then it cannot be locally free.
\end{example}

Since non-locally free actions may not generate non-singular foliations,  their study is much more challenging. Because of this, in this work we will always suppose that $\varPhi$ is a locally free group action. The next proposition will be useful through next sections.

\begin{proposition}\label{iso}
If $\varPhi$ is a locally-free action of $G$ on $M$, then there exists $\de>0$ such that $G_{x}\cap B^G_{\de}(e)=\{e\}$ for any $x\in M$.  
\end{proposition}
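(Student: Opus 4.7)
The plan is to argue by contradiction and exploit the compactness of $M$ together with a local tube/slice description of the action near a limit point. Specifically, I will assume no uniform $\de$ exists, extract sequences $x_n\in M$ and $g_n\in G_{x_n}\setminus\{e\}$ with $|g_n|\to 0$, and pass to a convergent subsequence $x_n\to x_\infty\in M$ using compactness of $M$. The goal is to show that local freeness at the single point $x_\infty$ is already enough to rule out such sequences.

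The main ingredient will be a local tubular parametrization of the action around $x_\infty$. First I would use that the isotropy $G_{x_\infty}$ is discrete to choose a neighborhood $W$ of $e$ in $G$ with $G_{x_\infty}\cap W=\{e\}$. Next, because $G_{x_\infty}$ is discrete its Lie algebra vanishes, so the orbit map $g\mapsto gx_\infty$ is an immersion at $e$; hence I may pick a small $C^r$-submanifold $T\subset M$ through $x_\infty$ of complementary dimension which is transverse to $Gx_\infty$ at $x_\infty$. The product map
$$\Psi: W\times T\to M,\qquad \Psi(g,y)=gy,$$
then has invertible differential at $(e,x_\infty)$, so after shrinking $W$ and $T$ the inverse function theorem guarantees that $\Psi$ is a $C^r$-diffeomorphism onto an open neighborhood $V$ of $x_\infty$.

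With this tube in hand, for $n$ large we have $x_n\in V$ and $g_n\in W$, so we can write $x_n=\Psi(h_n,t_n)=h_nt_n$ with $h_n\in W$ and $t_n\in T$. Continuity of $\Psi^{-1}$ gives $h_n\to e$ and $t_n\to x_\infty$. The hypothesis $g_nx_n=x_n$ now reads
$$(h_n^{-1}g_nh_n)\,t_n=t_n.$$
Setting $\tilde g_n=h_n^{-1}g_nh_n$ and using right-invariance of $\rho$ together with $h_n\to e$ and $|g_n|\to 0$, conjugation is continuous at $(e,e)$, so $\tilde g_n\to e$ and eventually $\tilde g_n\in W$. Then $\Psi(\tilde g_n,t_n)=t_n=\Psi(e,t_n)$, and the injectivity of $\Psi$ forces $\tilde g_n=e$, hence $g_n=e$, contradicting the choice of $g_n$.

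The only nontrivial step I expect is justifying the tube $\Psi$ rigorously: one needs local freeness at $x_\infty$ (to get the immersion) together with the existence of a transversal $T$, and then a careful application of the inverse function theorem. Everything else is a clean compactness-plus-continuity argument, and crucially no uniform bound on an ambient "injectivity radius" is needed — the contradiction is produced from a single tube neighborhood at the accumulation point $x_\infty$.
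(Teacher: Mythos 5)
Your proposal is correct and follows essentially the same route as the paper: argue by contradiction, use compactness of $M$ to extract an accumulation point $x_\infty$ of the sequence $x_n$, build a local tube (foliated/flow-box) neighborhood $\Psi(g,y)=gy$ around $x_\infty$, and derive the contradiction from the injectivity of that tube — the paper phrases this last step as the orbit of $x_n$ meeting the transversal twice in a short time, which is the same fact your conjugation argument with $\tilde g_n=h_n^{-1}g_nh_n$ makes explicit. Your writeup is somewhat more detailed about the inverse-function-theorem construction of the tube, but the underlying idea is identical.
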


\begin{proof}
This is a trivial consequence of the fact that $\varPhi$ is a foliated action.
Indeed, if the result is false, we can obtain a sequence of points $x_n\in M$ such that $$G_{x_n}\cap B^G_{\frac{1}{n}}(e)\neq\{e\}$$
Assume that $x_n\to x$. Since $\varPhi$ is foliated, then there is some $\eta>0$ and a local transversal $T_x$ to $x$ such that $U_{\eta,x}=\varPhi(B_{\eta}(e),T_x)$ is a foliated neighborhood of $x$ and $\varPhi(B_{\eta}^G(e),y)\cap T_x=\{y\}$ for any $y\in T_x$. Using the continuity of the action, we can find $n$ big enough such that: \begin{itemize}
    \item $\frac{1}{n}\leq \frac{\eta}{2}.$ 
    \item $\varPhi(B^G_{\frac{1}{n}}(e),x_n)\subset U_{\eta,x}$ 
    \item $x_n=g_nx_n$ for some $g_n\neq e$ with $|g_n|\leq \frac{1}{n}$
    \item $\varPhi(\ga_n,x_n)\subset U_{\eta,x}$, where $\ga_n$ is the geodesic on $G$ connecting $e$ and $g_n$.
    
    \end{itemize}
    This forces the orbit of $x_n$ to intersect $T_x$ twice in $U_{\eta,x}$ for small time, but this is impossible by the choice of $\eta$. Thus the proposition is proved.  
\end{proof}

Since locally-free actions are foliated, we can use the porperties of its orbit foliation to analyse the behaviour of actions. Next we will define a concept of expansiveness for foliations introduced by T. Inaba and M. Tsuchiya in \cite{IT}. Let  $\mathcal{F}$ be a folitation of $M$. By the compactness of $M$ there exists $\eps_0>0$ such that for any $0<\eps\leq\eps_0$ the  local transversal disc  $D_{\eps}(x)$ at $x$ with radius $\delta$ is well defined. For such an $\eps>0$, we always can consider a complete transversal $\mathcal{T}$ whose its elements are discs of the form $D_{\eps}(x)$. A $\mathcal{F}$-curve is a curve contained in some leaf of $\mathcal{F}$. Fix  some $\mathcal{F}$-curve $\al:[0,1]\to M$ and  let $N$ be some local transversal disc  containing $\al(0)$.

A fence $F$ along $\alpha$ is a continuous map $F:[0,1]\times N\to M$ such that:
\begin{itemize}
    \item $F|_{\{t\}\times N}$ is an embedding of $N$ into a transversal disc  $D_{\eps}(\al(t))$ for any $t$.
    \item $F|_{[0,1]\times\{x\} }$ is a $\mathcal{F}$-curve for any $x\in N$
    \item There exists $x_0\in N$ such that $F|{[0,1]\times\{x_0\} }=\al$.
\end{itemize}

\begin{definition}
	$\mathcal{F}$ is said to be expansive if there exists $\de>0$ such that for any $x\in M$ and $y\in D_{\de}(x)\setminus \{x\}$ there exists a $\mathcal{F}$-curve $\al$  such that $\al(0)=x$ and a fence $F$ along $\al$ such that $F(1,y)\notin D_{\de}(\al(1))$. 
	
\end{definition}

\begin{theorem}\label{ExpFol}
	The orbit foliation of a locally-free expansive action is expansive.
\end{theorem}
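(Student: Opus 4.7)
The plan is to argue by contradiction using the expansiveness of $\varPhi$: assume that the orbit foliation $\mathcal{F}$ is not expansive, and exploit the group structure to build a configuration violating the expansiveness of the action. The pivotal observation is that on the orbit foliation of a group action there is a canonical fence along any $\mathcal{F}$-curve, obtained simply by translating a small transversal by the path in $G$.

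First I would fix $\eps > 0$ smaller than the local freeness constant furnished by Proposition \ref{iso}, and small enough that at every $x \in M$ the map $(g,z) \mapsto \varphi(g,z)$ is a homeomorphism from $B^G_\eps(e) \times T_x$ onto a foliated neighborhood of $x$, where $T_x$ is a local transversal at $x$; such a uniform $\eps$ exists by compactness of $M$. Let $\delta_0 > 0$ be the constant furnished by expansiveness of $\varPhi$ associated with this $\eps$, and choose $0 < \delta < \delta_0$ small enough that the transversal disc $D_\delta(x)$ lies inside $T_x$ for every $x$. If $\mathcal{F}$ were not expansive, for this $\delta$ one could find $x, y \in M$ with $y \in D_\delta(x) \setminus \{x\}$ such that $F(1, y) \in D_\delta(\alpha(1))$ for every $\mathcal{F}$-curve $\alpha$ starting at $x$ and every fence $F$ along $\alpha$.

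Next I would feed the canonical fences into this hypothesis. Given a continuous path $\gamma:[0,1] \to G$ with $\gamma(0) = e$, set $\alpha(t) := \varphi(\gamma(t), x)$ and define $F(t, z) := \varphi(\gamma(t), z)$ on $N := D_\delta(x)$. One checks immediately that $F$ is a fence along $\alpha$: each slice $F(t, \cdot)$ is the restriction to $N$ of the homeomorphism $\varphi_{\gamma(t)}$ and therefore embeds $N$ into a transversal disc at $\alpha(t)$; each curve $F(\cdot, z)$ lies in the orbit of $z$, hence in a leaf of $\mathcal{F}$; and $F(\cdot, x) = \alpha$. The hypothesis then yields $\varphi(\gamma(1), y) \in D_\delta(\varphi(\gamma(1), x))$, so in particular $d(\gamma(1) x, \gamma(1) y) < \delta$. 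Since $G$ is a connected Lie group and therefore path-connected, every $g \in G$ is realized as $\gamma(1)$ for some such path, and hence
\[
d(gx, gy) < \delta < \delta_0 \qquad \text{for every } g \in G.
\]

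To close the loop I would apply expansiveness of $\varPhi$ with the continuous identity homomorphism $h := \mathrm{id}_G$, which fixes $e$: the above estimate reads $d(gx, h(g)y) < \delta_0$ for all $g$, so there exists $g_0 \in G$ with $|g_0| < \eps$ and $y = g_0 x$. Then $y \in T_x$ admits two descriptions $y = \varphi(g_0, x) = \varphi(e, y)$ with $g_0 \in B^G_\eps(e)$ and $x, y \in T_x$, and injectivity of the foliated chart on $B^G_\eps(e) \times T_x$ forces $g_0 = e$ and $y = x$, contradicting $y \neq x$. The step I expect to require the most care is the initial setup — choosing a single $\eps$ that makes the foliated chart uniform across $M$ and simultaneously lies below the local-freeness scale, which a standard compactness argument handles — together with the verification that the natural translate $(t,z) \mapsto \varphi(\gamma(t),z)$ genuinely meets all the axioms of a fence.
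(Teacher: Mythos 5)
Your proof is correct and is essentially the paper's argument in contrapositive form: both rest on the canonical fence $F(t,z)=\varphi(\gamma(t),z)$ built from a path in $G$, the identity reparametrization in the definition of expansiveness of the action, and Proposition \ref{iso} to rule out $y$ lying on the short orbit segment of $x$. The paper runs the argument directly (local freeness excludes $y$ from $\varphi(B_\eps(e),x)$, expansiveness then produces a separating $g_0$, and the geodesic from $e$ to $g_0$ furnishes the fence), whereas you negate foliation expansiveness first; the content is the same, and your write-up is in fact somewhat more careful about verifying the fence axioms.
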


\begin{proof}
	Suppose $\Phi$ is an expansive locally free action and fix some complete transversal $\mathcal{T}$ to the orbit folialion of $\Phi$. Proposition \ref{iso} allows us to chose $\eps>0$ such that for any $x\in M$ one has $\vphi(B_{\eps}(e),x)\cap T(x)=\{x\}$. Let $\de>0$ be the expansive constant related to $\eps$. Now fix $x\in M$ and  take $y\in D_{\de}(x)$. By the choice of $\eps$, we have that $y\notin\vphi(B_{\eps}(e),x)$. Then expansveness gives us a $g_0\in G$ such that $d(g_0x,g_0y)>\de$. Now let $\gamma:[0,1]\to G$ be a curve connecting $e$ and $g_0$.  Define the map $F([0,1]\times D_{\de}(x))\to M$ by $$F(t,p)=\vphi(\gamma(t),p)$$.
	
	Previous map is clearly a fence satisfying $F(1,y)\notin D_{\de}(g_0x)$ and therefore the orbit foliations of $\vphi$ is expansive.
	
\end{proof}

Previous result allows us to prove Theorem \ref{EC}.

\begin{proof}[Proof of Theorem \ref{EC}]
Let $\varPhi$ be a locally-free and codimension-one group action of a nilpotent Lie group $G$ on a closed manifold $M$. In \cite{HGM}, Hector, Ghys and Moriyama showed that the orbit foliation of $\varPhi$ is almost without holonomy, i.e., every non-compact leaf has trivial holonomy. On the other hand, if $\varPhi$ is expansive, then Theorem \ref{ExpFol} combined with the Corollary 2.6 in \cite{IT} implies that some orbit of $\varPhi$ is a resilient leaf. But a resilient leaf a non-compact and has non-trivial holonomy.
This is a contradiction, and then Theorem A is proved.

\end{proof}

\section{Geometric Entropy of Expansive Pseudo-Group Actions}

Now we begin to investigate the relationship between the expansiveness and the geometric entropy for connected Lie Group Actions. Let us begin recalling the concept of a pseudo-group of local homeomorphisms.

\begin{definition}
   Let $X$ be a topological space. A peseudo-group $\mathcal{G}$ on $X$ is a family $$\mathcal{G}=\{h:D_h\to R_h\}$$ of local homeomorphism of $X$ satisfying the followinng conditions:

\begin{enumerate}
    \item If $h,g\in \mathcal{G}$ and $D_g\subset R_h$, then $g\circ h\in \mathcal{G}$
    \item If $g\in \mathcal{G}$, then $g^{-1}\in \mathcal{G}$
    \item If $g\in \mathcal{G}$ and $U\subset D_g$, then $g|_U\in \mathcal{G}$
    \item If $g$ is a local homeomorphism of $X$ and $\mathcal{U}$ is an open cover of $D_g$ such that $g|_{U}\in \mathcal{G}$, then $g\in \mathcal{G}$.
    \item $Id_X\in \mathcal{G}$.
\end{enumerate}
\end{definition}

For any set $\Gamma$ of local homeomorphisms of $X$ satisfying $$\bigcup_{g\in \Gamma} (D_g \cup R_g)=X,$$ 
we can always find a pseudo-group $\mathcal{G}(\Gamma)$ of local homeomorphisms of $X$ generated by $\Gamma$, i.e, for any $g\in \mathcal{G}(\Gamma)$ and $x\in D_g$ one can find $g_1,...,g_i\in \Gamma$, $e_1,...,e_i\in \{-1,1\}$ and a neighborhood $U$ of $x$ satisfying $$g|_U=g_i^{e_i}\circ\cdots\circ g_1^{e_1}|_U.$$

\begin{definition}
   We say that a pseudo-group $\mathcal{G}$ is finitely generated if there is a finite set $\Gamma\subset \mathcal{G}$ such that $\mathcal{G}=\mathcal{G}(\Gamma)$.

\end{definition}

Remember that if $\mathcal{F}$ is a foliation of $M$, the it has a natural associated pseudo-group $\mathcal{G}$, namely, its holonomy pseudo-group. When $M$ is compact, we have that $\mathcal{G}$ is finitely generated (see \cite{Wa}). Next we will describe a natural way to obtain a finite generator for the holonomy pseudo-group of the orbit foliation of a locally-free action. 

Let $\varPhi$ be a locally-free action on $M$ and fix some point $x\in M$. Let $\mathcal{T}$ be a complete tranversal to the orbit foliation of $\varPhi$. Since the action is locally free, for every $\eps>0$ we can find $\de_x>0$ such that $T_x=B_{\de_x}(x)\cap T(x)$ is a local cross-section of time $\eps$ for the action $\varPhi$ through $x$, i.e., $\varPhi(B_{\eps}(e),y)\cap T_x=\{y\}$, for every If $y\in T_x$. By compactness of $M$ we can find $\{x_1,...,x_n\}\in M$ such that $$\bigcup_{i=1}^n\varPhi(B_{\eps},T_{x_i})=M$$ 

The last condition, implies that the holonomy maps between the cross-section $T_{x_i}$ generates the holonomy pseudo-group of the orbit foliation of $\varPhi$. Note that these conditions are totally analogous to the techniques of cross-sections developed by R. Bowen and P. Walters in \cite{BW} and by H.B. Keynes and M. Sears in \cite{KS} to study $BW$-expansive flows. 

In \cite {IT}, the authors proved that any expansive codimension one foliation has positive entropy. This is a consequence of the existence of resilient leaves. For higher codimensional expansive foliations, they proved the same result under stronger assumptions on the expansiveness of $\mathcal{F}$. Our main goal on this section is to prove the positiveness of geometric entropy for expansive pseudo-group actions. In particular,  this weakens the stronger assumption in \cite {IT} and implies positive entropy  for any positive dimensional and expansive  foliation. In particular, it will imply that expansive actions of connected Lie groups do have positive geometric entropy. First let us recall, the definition of geometrical entropy for pseudo-groups. 

Let $\mathcal{G}$ be a finitely generated pseudo-group of local homeomorphims of a compact metric space $M$, with a generator $G$. Let $g\in \mathcal{G}$. We say that $g$ has size $k$ and denote $\#g=k$ if the minimum amount of elements of $G$ needed to write $g$ is $k$. Precisely, we can write $g=g_{i_1}\circ...\circ g_{i_k}$, with $g_{i_1},...,g_{i_k}\in G$ and it is not possible to write $g$ with less than $k$ elements of $g$. Fix some $\eps>0$ and some natural $n$.  We say that a pair of points $x,y\in M$ is $n$-$\eps$-$G$-separated by $\mathcal{G}$ if  there exists $g\in \mathcal{G}$ such that $x,y\in D_g$, $\#g\leq n$ and  $d(g(x),g(y))>\eps$. 

A subset $E\subset M$ is $n$-$\eps$-$G$-separated if any pair of its distinct points is $n$-$\eps$-$G$-separated by $\mathcal{G}$. Let $S(n,\eps,G)$ denote the maximal cardinality of a $n$-$\eps$-$G$-separated subset of $M$.

\begin{definition}

The topological entropy of $\mathcal{G}$ with respect to $G$ is defined to be 

$$h(\mathcal{G},G)\lim_{\eps\to 0}\limsup_{n\to \infty}\frac{1}{n}\log S(n,\eps,G)     $$

\end{definition}

It is a classical fact that if a pseudo-group has positive entropy with respect to some finite generator $G$, then it also has positive entropy with respect to any other finite generator(see \cite{Wa}). Next concept is a weaker notion of expansiveness for actions of pseudo-groups.

\begin{definition}
A pseudo-group $\mathcal{G}$ is $CW$-expansive if there is some $\delta>0$ such that if $C\subset M$ is non-trival compact and connected set,
 there is some $g\in \mathcal{G}$ such that $C\subset D_g$ and $diam(g(C))>\delta$. The constant $\delta$ is called the expansiveness constant of $\mathcal{G}$.
\end{definition}

Since groups of homeomorphisms are also pseudo-group, then a trivial example of $CW$-expansive pseudo group is a group of homeomorphims containing a $CW$-expansive homeomormphim (see \cite{Ka}). A non-trivial example is the holonomy pseudo-groups of the orbit foliation of a $CW$-expansive flows (see \cite{ACP}). It is immediate that the concept of expansiveness for pseudo-groups presented in \cite{Wa} implies $CW$-expansiveness. In addition, the concept of expansiveness introduced in \cite{Wa} is equivalent to expansiveness for foliations.  Then by Theorem \ref{ExpEnt} the holonomy pseudo-group of the orbit foliation of an expansive action is $CW$-expansive. Next theorem states that $CW$-expansive and finitely generated pseudo-groups satisfy a uniform version of expansiveness.

\begin{theorem}\label{unexpo}
    Let $\mathcal{G}$ be a finitely generated pseudo-group  of local homeomorphisms of a compact metric space $M$. If $\mathcal{G}$ is $CW$-expansive with expansive constant $\delta$, then there are a finite generator $G$ and $\eps_0>0$   such that for any $0<\eps<\eps_0$ , there is some $N\in \N$ such that:
    
    If $C\subset M$  satisfy $\eps\leq diam(C)< \eps_0$, then there is $g\in \mathcal{G}$ with $\#g\leq N$, such that $C\in D_g$ and $diam(g(C))>\delta$.
\end{theorem}

\begin{proof}
    We start fixing a finite generator $G$ of $\mathcal{G} $ such that $D=\{D_g\}_{g\in G}$ is an open cover for $M$. Fix $\eps_0>0$ the Lebesgue number of the cover $D$.  
    Suppose that $\mathcal{G}$ is expansive with constant $\eps_0$ and suppose that we can find $0<\eps<\eps_0$ with the following property: For every $n\in \N$, there are $g_n\in G$ and compact and connected sets $C_n\subset D_{g_n}$ such that $diam(C_n)\geq \eps$ and $diam(g(C_n))\leq \delta$, for every $g$ such that $C_n\subset D_{g}$ and $\#g\leq n$.
 
 Since $G$ is finite we can assume that $D_{g_n}=D_g$ for every $n\in \N$ and some $g\in G$. Now the compactness of the continuum hyperspace allows us to assume that $C_n\to C$ in the hausdorff topology. Moreover, we have $C\subset D_g$ and  $diam(C)\geq \eps$.  Since $G$ is finite, then the set $$G_k=\{g;\#g\leq k\}$$ is also finite. If we fix any $g\in G_k$, then we have that if $n>k$, then  $diam(g(C))\leq \de$. Since $g$ is fixed, by continuity we have that $$diam(g(C))\leq\lim\limits_{n\to \infty}diam(g(C_n))\leq\de$$
 
 Since $k$ and $g$ were chosen arbitrarily,  this contradicts the $CW$-expansiveness of $\mathcal{G}$.

\end{proof}

Now we are able to prove Theorem \ref{EntPG}.

\begin{proof}[Proof of Theorem \ref{EntPG}]

 Let us fix $\eps_0>0$ and a generator $G$ of $\mathcal{G}$ as in  Theorem \ref{unexpo}. Take any $0< \eta <  \max\{\frac{\de}{4},\delta_0\}$ and let $N\in \N$ be given by the uniform $CW$-expansiveness with respect to $\eta$ giben again by Theorem \ref{unexpo}.

Fix some $g\in G$ and $D_g$. Since $M$ is locally connected and infinite, we can find $x\in M$ such that its connected component $C(x)$ is non-trivial. Therefore $CW$-expansiveness implies  that we can find some  $g\in \mathcal{G}$ such that $g(C(x))$ has some connected subset $C_0$ such that $diam(C_0)\geq \de$. In particular, this implies the existence of $2$ points in $C(x)$ which are $k$-$\de$-$G$separated, where $k=\#g$.

Since $M$ is locally connected, we can find two disjoint and connected subsets that $C_1^0$ and $C_1^1$ of $C_0$ with diameter grater than $\eta$. Now, uniform $CW$-expansiveness implies that there are $g_1^0,g_1^1$, satisfying $\#g_1^0,\#g_1^1\leq N$ and such that $diam(g_1^0(C_1^0))>\de$ and  $diam(g_1^1(C_1^1)>\de$.
This implies in the $C(x)$ the existence of $4$ points which are $(k+N)$-$\de$-$G$-generated.

If we take again on each set $C_1^0$ and $C_1^1$ two connected  and disjoint sets with diameter at least $\eta$, then we can find a set of $8$ points in $C(x)$ which are $(k+2N)$-$\de$-$G$-separated.

Inductively, the previous construction give the existence of $(k+nN)$-$\de$-$G$-separated sets $E_n$, with at least $2^{n+1}$ elements. Therefore,  the geometrical entropy of $\mathcal{G}$ with respect to $G$ satisfies the following:

$$h(\mathcal{G},G)=\lim_{\eps\to 0}\limsup_{n \to \infty}\frac{1}{n}\log S(n,\eps,G)\geq \lim_{n\to \infty}\frac{1}{k+nN}\log\#E_n=\frac{\log 2}{N}>0   $$

Since the positiveness of the geometric entropy does not depend on the  finite generator chosen, then the proof is complete.
\end{proof}

A direct application of Theorem \ref{EntPG} to the holonomy-pseudo of an expansive foliation gives Corollary \ref{EntFol}. Recall that the geometric entropy of a locally-free action is the geometric entropy of its orbit foliation. We are now able to prove Corollary \ref{ExpEnt}.

\begin{proof}[Proof of Corollary \ref{ExpEnt}] Suppose that $\varPhi$ is a locally-free expansive action of a connected Lie Group $G$ on $M$.
Since $\varPhi$ is locally free, we have that the orbits of $\varPhi$ generates a foliation $\mathcal{F}$ of $M$. By Theorem \ref{ExpFol}, $\mathcal{F}$ is expansive. Now Corollary \ref{EntFol} implies that $\mathcal{F}$ has positive geometric entropy and Corollary \ref{ExpEnt} is proved.

\section{Centralizers of Expansive Actions of Connected Lie Groups}

In this section we investigate the symmetries of actions of connected Lie Groups. We begin introducing the centralizer set of an action.  The study of the symmetries of dynamical systems is a classical problem which has  an algebraic flavor. It raised from group theory where it is interesting to know which elements of a given group $G$ commute with a fixed element  $g$ of $G$. To transpose this question to the dynamical systems scenario, we can think as follows. Let $M$ be a smooth manifold and let $Diff^r(M)$ denote the group of $C^r$-diffeomorphisms of $M$. If $G$ is a group, we can see a $C^r$-action of $G$ on $M$ as a  group-homomorphism $\rho:G \to Diff^r(M)$. 
Since $\rho(G)$ is a subgroup of $Diff^r(M)$, to study the symmetries of an $G$-action on $M$ is  equivalent to study which subgroups of $Diff^r(M)$ that are images of $G$-actions on $M$  commute with  $\rho(G)$. In this way, the problem of finding the symmetries $\rho(G)$ is an algebraic version of finding the actions which commutes $\varPhi$. 

Now let us precise the above comment. For a fixed $\varPhi\in \mathcal{A}^r(G.M)$, we say that $\varPsi\in \mathcal{A}^r(G,M)$ commutes with $\varPhi$ if $$\varPsi_g\circ\varPhi_h=\varPhi_h\circ\varPsi_g,$$
for every $g,h\in G$.
The $C^r$-centralizer of a $G$-action ,$\varPhi$, on $M$ is the set
$$\mathcal{C}^r(\varPhi)=\{\varPsi\in \mathcal{A}^r(G,M);\varPsi \textrm{ commutes with } \varPhi\}.$$

 \begin{definition}
An action $\varPhi$  of $G$ on $M$ has quasi-trivial $C^r$-centralizer if any $\varPsi\in \mathcal{C}^r(\varPhi) $ satisfies the following condition:
 There is some map $\xi:M\to End(G)$ constant along the orbits of $\varPhi$ and such that $\varPhi(\xi(g),\cdot)=\varPsi(g,\cdot)$ for any $g\in G$
 \end{definition}

For the case when $G=\R^d$, we mention the work \cite{BRV} of W. Bonomo, J. Rocha and P. Varandas, where it was proved the following:

\begin{theorem}[\cite{BRV}]\label{TBRV}
The $C^r$-centralizer of any expansive $\R^d$-action is quasi-trivial. 
\end{theorem}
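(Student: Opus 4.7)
The plan is to argue directly from the expansiveness inequality that every $\varPsi$ in the centralizer of $\varPhi$ must move points along $\varPhi$-orbits, and that the resulting ``time-change'' depends continuously on both the parameter and the base point. Fix $\eps>0$ small enough that the map $t\mapsto\varPhi_t(x)$ is injective on $B_\eps(0)\subset\R^k$ for every $x$ (this uses local freeness, cf.\ Proposition \ref{iso}; without it the analogous statement must be understood modulo isotropy), and let $\de>0$ be the expansiveness constant associated to $\eps$.

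The first and key step is local. By compactness of $M$ and continuity of $\varPsi$, there is $\eta>0$ such that $|s|<\eta$ implies $d(\varPsi_s(y),y)<\de$ for every $y\in M$. Fix such an $s$ and any $x\in M$, and take for the continuous reparametrization $h=\mathrm{id}_{\R^k}$, so that $h(0)=0$. Using the commutation relation $\varPhi_t\circ\varPsi_s=\varPsi_s\circ\varPhi_t$,
\[
d(\varPhi_t(x),\varPhi_{h(t)}(\varPsi_s(x)))=d(\varPhi_t(x),\varPsi_s(\varPhi_t(x)))<\de\qquad\text{for every }t\in\R^k.
\]
The definition of expansiveness then delivers $t_0=t_0(s,x)\in B_\eps(0)$ with $\varPsi_s(x)=\varPhi_{t_0}(x)$, and uniqueness of $t_0$ comes from the choice of $\eps$. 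Setting $A(s,x):=t_0$ produces a continuous map $A:B_\eta(0)\times M\to B_\eps(0)\subset\R^k$ with $\varPsi_s(x)=\varPhi_{A(s,x)}(x)$.

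To globalize, I would iterate the local formula: for arbitrary $s\in\R^k$, pick $N$ with $|s/N|<\eta$ and factor $\varPsi_s=\varPsi_{s/N}^{\,N}$. Applying $\varPsi_{s/N}(y)=\varPhi_{A(s/N,y)}(y)$ successively along the $\varPsi_{s/N}$-orbit of $x$ and using that $\R^k$ is abelian so the $\varPhi_t$'s commute,
\[
\varPsi_s(x)=\varPhi_{\Sigma_N(s,x)}(x),\qquad \Sigma_N(s,x)=\sum_{j=0}^{N-1}A\bigl(s/N,\varPsi_{js/N}(x)\bigr).
\]
Continuity of $\Sigma_N$ in $(s,x)$ together with the fact that $\R^k$ is simply connected then upgrades this to a globally continuous cocycle $A:\R^k\times M\to\R^k$ satisfying $\varPsi_s(x)=\varPhi_{A(s,x)}(x)$ and the cocycle identity $A(s+s',x)=A(s,\varPsi_{s'}(x))+A(s',x)$; this is exactly quasi-triviality.

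The main obstacle is precisely this globalization: a priori $A$ is only determined modulo the isotropy subgroup $\R^k_x$, and one must check that the partial sums $\Sigma_N(s,x)$ lift consistently as $N$ and the partition vary, and that the final $A(s,x)$ does not jump when $s$ crosses between different local branches. Because the isotropy subgroups are discrete (local freeness) and $\R^k$ is simply connected, this is a standard monodromy argument: continuity of $s\mapsto A(s,x)$ together with $A(0,x)=0$ determines the lift uniquely on all of $\R^k$. This is the step where the algebraic structure of the acting group is essential, and it is the reason why the analogous result for a general connected Lie group $G$ (Theorem \ref{CC}) requires the extra hypothesis that $\exp:\mathfrak{G}\to G$ be a homomorphism.
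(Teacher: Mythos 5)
First, a point of comparison: the paper does not prove Theorem \ref{TBRV} at all --- it is quoted from \cite{BRV} and used as a black box in the proof of Theorem \ref{CC}, so there is no in-paper argument to measure yours against. What you have written is a reconstruction of the Bonomo--Rocha--Varandas strategy (a local time-change extracted from expansiveness with $h=\mathrm{id}$, then globalization by subdivision and monodromy), and the local step is correct: taking $y=\varPsi_s(x)$ and $h=\mathrm{id}$ in the definition of expansiveness does produce $t_0\in B_\eps(0)$ with $\varPsi_s(x)=\varPhi_{t_0}(x)$.

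Two genuine gaps remain. First, your uniqueness of $t_0$ and the discreteness of isotropy that drives the monodromy argument both assume the action is locally free, while Theorem \ref{TBRV} as stated carries no such hypothesis; the paper itself exhibits in Section 4 an expansive $\R^2$-action whose orbits are one-dimensional, hence with non-discrete isotropy. For such actions $t\mapsto\varPhi_t(x)$ is injective on no ball, $A(s,x)$ is only determined modulo a positive-dimensional closed subgroup, and the covering-space argument does not apply as written; this case needs separate treatment. Second, the object you end with --- a continuous cocycle $A$ over $\varPsi$ --- is not yet quasi-triviality in the paper's sense, which requires $A(\cdot,x)$ to be a group endomorphism of $\R^k$ constant along $\varPhi$-orbits. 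The missing (short, but necessary) step is orbit-invariance: commutativity gives $\varPhi_{A(s,\varPhi_u(x))}(\varPhi_u(x))=\varPhi_{A(s,x)}(\varPhi_u(x))$, so discreteness of isotropy plus continuity in $u$ forces $A(s,\varPhi_u(x))=A(s,x)$; only then does your cocycle identity collapse to additivity $A(s+s',x)=A(s,x)+A(s',x)$, which continuity upgrades to linearity. Without orbit-invariance the cocycle identity is strictly weaker than the endomorphism property you need.
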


 Our main goal here is to extended that result to more general Lie groups. Essentially, an action $\varPhi$ has quasi-trivial centralizer if any action commuting with $\varPsi$ has the same orbits as $\varPhi$, but the time is reparametrized by endomorphisms which only vary transversally to the orbits. We remark that previous definition naturally generalizes the respective definition of quasi-triviality for  $\R^d$-actions given in \cite{BRV}. 

Hereafter, we will proceed to obtain Theorem \ref{CC}.  Let us discuss the ideas behind this proof. In \cite{BRV}, the authors started with an expansive $\R^k$-action $\varPhi$ and given any other action $\varPsi$ commuting with $\varPhi$ it was possible to find a local group homomorphism which locally reparametrizes $\R^k$. 
The hard task here is to extend this local homomorphism to a global endomorphism of $\R^k$. 
This extension was strongly supported on the vector space structure of $\R^k$. 

Now if we are working with general Lie groups, we do not have an available vector space structure for $G$. But we have a natural vector space associated to $G$, namely the  Lie algebra $\mathfrak{G}$ of $G$. Recall that $\mathfrak{G}$ is isomorphic to $T_eM$. Here we are denoting $\exp$ for the exponential map at the identity element of $G$. To use the structure of vector space of $\mathfrak{G}$ we will see $\mathfrak{G}$ as an additive group and suppose that $\exp$ is a surjective group-homomorphism. Observe that a group  $G$ under the previous assumption, must be abelian. Clearly $\R^k$ satisfies previous assumption, but there are other examples of  such groups such as cylinders and more general  products of an abelian compact lie groups with some $\R^k$.

The idea behind our generalization is that starting with an expansive action of $G$ one can obtain a related expansive action of $\R^k$, if the group $G$ has an adequate structure. Suppose that $G$ is a  Lie group under the hypothesis of Theorem \ref{CC}. Recall that by the group isomorphism theorem we have that $\faktor{\mathfrak{G}}{Ker(\exp_e)}$ is isomorphic to $G$. Let us denote $\rho$ for this isomorphism and recall that $\rho$ is the factor map of $\exp$. Now, given an action $\varPhi:G\times M\to M$, we can use $\rho$ to induce an action $$\varPhi':\faktor{\mathfrak{G}}{Ker(\exp)}\times M \to M$$ as follows:

$$  \varPhi'(v,x)=\varPhi(\rho(v),x)$$

Next proposition is an elementary consequence of the definitions.

\begin{proposition}\label{expiso}
If $\varPhi$ is expansive, then $\varPhi'$ is expansive.
\end{proposition}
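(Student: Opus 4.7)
The plan is to derive expansiveness of $\varPhi'$ from that of $\varPhi$ by a direct transport of structure through the isomorphism $\rho$. Since $\rho : \faktor{\mathfrak{G}}{\mathrm{Ker}(\exp)} \to G$ is a Lie group isomorphism (hence a diffeomorphism), we may equip the quotient with the right-invariant metric obtained by pulling back the right-invariant metric of $G$ through $\rho$. With this choice, $\rho$ is an isometry, so $|v| = |\rho(v)|$ for all $v \in \faktor{\mathfrak{G}}{\mathrm{Ker}(\exp)}$; in particular, balls around the identity correspond exactly.

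Now I would argue as follows. Fix $\eps>0$ and let $\de>0$ be the expansivity constant of $\varPhi$ associated with $\eps$. I claim that the same $\de$ witnesses expansiveness of $\varPhi'$ for $\eps$. Suppose there are $x,y\in M$ and a continuous map $h' : \faktor{\mathfrak{G}}{\mathrm{Ker}(\exp)} \to \faktor{\mathfrak{G}}{\mathrm{Ker}(\exp)}$ fixing $0$ with $d(\varPhi'(v,x),\varPhi'(h'(v),y)) < \de$ for every $v$. Define $h : G \to G$ by $h = \rho \circ h' \circ \rho^{-1}$. Then $h$ is continuous and fixes $e$, and for any $g \in G$, writing $v = \rho^{-1}(g)$, the definition of $\varPhi'$ yields
\[
d(\varPhi(g,x),\varPhi(h(g),y)) \,=\, d(\varPhi'(v,x),\varPhi'(h'(v),y)) \,<\, \de.
\]
Expansiveness of $\varPhi$ then produces $g_0 \in G$ with $|g_0|<\eps$ and $y = \varPhi(g_0,x)$. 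Setting $v_0 = \rho^{-1}(g_0)$, the isometry property gives $|v_0|<\eps$, and $y = \varPhi(\rho(v_0),x) = \varPhi'(v_0,x)$, which is exactly the conclusion required for $\varPhi'$.

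The argument is essentially a bookkeeping step and I do not anticipate a serious obstacle; the only point worth double-checking is that the metric on the quotient Lie group $\faktor{\mathfrak{G}}{\mathrm{Ker}(\exp)}$ is chosen consistently with the metric on $G$ so that $\rho$ becomes an isometry (equivalently, a bi-Lipschitz homeomorphism would suffice, at the cost of shrinking $\eps$ to some $\eps'$ in the comparison of ball radii). Once this is fixed, continuity of $h'$ transfers to continuity of $h$ and vice versa, and the identity-fixing condition on $h'$ at $0 \in \faktor{\mathfrak{G}}{\mathrm{Ker}(\exp)}$ translates into the identity-fixing condition on $h$ at $e \in G$, closing the proof.
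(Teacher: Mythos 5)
Your proposal is correct and follows essentially the same route as the paper: conjugate the reparametrization by $\rho$, transport the $\de$-closeness inequality to $G$, apply expansiveness of $\varPhi$, and pull the resulting $g_0$ back through $\rho^{-1}$. The only cosmetic difference is that you make $\rho$ an isometry by pulling back the metric, whereas the paper handles the ball comparison by taking $\eps$ small enough that $\exp$ is a local isometry near the identity; both settle the same bookkeeping point.
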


\begin{proof}
Suppose that $\varPhi$ is expansive. First, we fix $\eps>0$ small enough such that the exponential map is a local isometry from $B_{\eps}(0)$ to $G$. Observe that if we consider the metric induced by $\exp$ into $\faktor{\mathfrak{G}}{Ker(\exp)}$,  then  the same $\eps>0$ makes $\rho$ a local isometry from $\faktor{\mathfrak{G}}{Ker(\exp)}$ to $G$.   Now let $\de>0$ be given by the expansiveness of $\varPhi$. Suppose that there are $x,y\in M$ and a continuous  map $$\eta:\faktor{\mathfrak{G}}{Ker(\exp)}\to \faktor{\mathfrak{G}}{Ker(\exp)}$$ satisfying $\eta(e)=e$ such that $$d(\varPhi'_v(x),\varPhi'_{\eta(v)}(y))\leq \de$$ for any $v\in \faktor{\mathfrak{G}}{Ker(\exp)}$. 

Since $\rho$ is a group isomorphism, then $$\rho'=\rho\circ\eta\circ\rho^{-1}:G\to G$$ is a continuous map fixing $e$. Moreover, we have that $$d(\varPhi_{g}(x),\varPhi_{\rho(\eta(\rho^{-1}(g)))}(y))= d(\varPhi'_{\rho^{-1}(g)}(x),\varPhi'_{\eta(\rho^{-1}(g)}(y))\leq \de $$ for any $g\in G$. 

Thus, there is some $g_0\in B_{\eps}(e)$ such that $\varPhi_{g_0}(y)=x$. But this implies that $\varPhi'_{\rho^{-1}(g_0)}(x)=y$ and then $\varPhi'$ is expansive.
\end{proof}

Next suppose that $\varPhi$ and $\varPsi$ are two commuting $G$-actions on $M$. Then for any $v,u\in \mathfrak{G}$ we have the following:

$$\varPhi'_{v}\circ\varPsi'_{u}=\varPhi_{\rho(v)}\circ \varPsi_{\rho(u)}=\varPsi_{\rho(u)}\circ \varPhi_{\rho(v)}=\varPsi'_{u}\circ\varPhi'_{v}$$

But previous observations easily imply the following result:

\begin{proposition}\label{centiso}
For any $r\geq 0$ one has $\varPsi\in \mathcal{C}^r(\varPhi)$ if, and only if $\varPsi'\in \mathcal{C}^r(\varPhi')$
\end{proposition}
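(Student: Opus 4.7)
The plan is to treat this as a pure transport of structure via the Lie group isomorphism $\rho:\faktor{\mathfrak{G}}{Ker(\exp)}\to G$. Since $\varPhi'$ is defined by precomposing with $\rho$ in the group variable, both the commutation of actions and the $C^r$-regularity should pass freely between the $G$-side and the $\faktor{\mathfrak{G}}{Ker(\exp)}$-side. I will therefore handle the two implications by re-reading the chain of equalities displayed just before the statement, and then separately verify that the regularity class is preserved.

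For the forward implication, suppose $\varPsi\in\mathcal{C}^r(\varPhi)$. The computation displayed immediately before the proposition,
$$\varPhi'_v\circ\varPsi'_u=\varPhi_{\rho(v)}\circ\varPsi_{\rho(u)}=\varPsi_{\rho(u)}\circ\varPhi_{\rho(v)}=\varPsi'_u\circ\varPhi'_v,$$
is valid for every $v,u\in\faktor{\mathfrak{G}}{Ker(\exp)}$ and gives $\varPsi'\in\mathcal{C}^r(\varPhi')$ as soon as the regularity is confirmed. For the converse I would exploit the bijectivity of $\rho$: given arbitrary $g,h\in G$, pick the unique $v,u$ with $\rho(v)=g$ and $\rho(u)=h$ and run the same chain of equalities in reverse to conclude that $\varPhi_g\circ\varPsi_h=\varPsi_h\circ\varPhi_g$ for all $g,h\in G$.

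The only step still to check is that the correspondence $\varPhi\leftrightarrow\varPhi'$ preserves membership in $\mathcal{A}^r$. Since $\rho$ is an isomorphism of Lie groups, it is a $C^\infty$-diffeomorphism with $C^\infty$-inverse; consequently $\varPhi'(v,x)=\varPhi(\rho(v),x)$ is $C^r$ in $(v,x)$ if and only if $\varPhi(g,x)$ is $C^r$ in $(g,x)$, and analogously for the pair $\varPsi,\varPsi'$. I do not expect any real obstacle: the entire content of the proposition is that an isomorphism of acting groups transports both the algebraic commutation relations and the regularity class, and essentially all of the work has already been carried out in the display preceding the statement.
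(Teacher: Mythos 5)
Your proposal matches the paper's own argument: the paper's entire ``proof'' of this proposition is the displayed chain of equalities $\varPhi'_v\circ\varPsi'_u=\varPhi_{\rho(v)}\circ\varPsi_{\rho(u)}=\varPsi_{\rho(u)}\circ\varPhi_{\rho(v)}=\varPsi'_u\circ\varPhi'_v$, with the converse and the preservation of $C^r$-regularity left implicit since $\rho$ is a (smooth) group isomorphism. Your write-up simply makes those implicit steps explicit, so it is correct and takes essentially the same route.
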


\end{proof}

Now we already have all the necessary elements to prove the main theorem of this section.

\begin{proof}[Proof of Theorem \ref{CC}]
Let $\varPhi$ be an expansive action and suppose that $\exp:\mathfrak{G}\to G$ is a  group-homomorphism. Fix $\varPhi'\in \mathcal{C}^r(\varPhi)$. Let $$\rho:\faktor{\mathfrak{G}}{Ker(\exp)}\to G$$ be the factor isomorphism of $\exp$. Let $\varPsi'$ be the action induced in $\faktor{\mathfrak{G}}{Ker(\exp)}$ by $\rho$.

Recall that $\faktor{\mathfrak{G}}{Ker(\exp)}$ is a finite dimensional real vector space, then it is isomorphic to some $\R^n$. Therefore, $\varPhi'$ and $\varPsi'$ can be seen as  actions of $\R^n$ on $M$. By Propositions \ref{expiso} and \ref{centiso}  $\varPhi'$ is expansive and $\varPsi'\in \mathcal{C}^r(\varPhi')$.  Now Theorem \ref{TBRV} implies that for any $x\in M$ there is a group-endomorphism $\eta_x$ of $\mathfrak{G}$ such that $\varPsi'_{\eta(v)}(x)=\varPhi'_{v}(x)$ for any $x\in M$ and $v\in \mathfrak{G}$, satisfying $\eta_x=\eta_{y}$ for any $y\in O_{\varPhi'}(x)$. 

Define a family of endomorphism of $G$ by $$\eta'_{x}=\rho\circ\eta_x\circ\rho^{-1}.$$ 
Now, this  implies that $$\varPsi_{g}(x)=\varPsi'_{\rho^{-1}(g)}(x)=\varPhi'_{\eta(\rho^{-1}(g))}(x)=\varPhi_{\rho(\eta_x(\rho^{-1}(g)))}(x)$$

For every $x\in M$ and $g\in G$. It is clear that $\eta'_x=\eta'_y$ for any $y\in O_{\varPhi}(x)$ and this concludes the proof.

\end{proof}

\section{ Expansive Actions of Finitely Generated Groups}

 Through this section $\varPhi$ will denote an action of a finitely generated group $G$. We begin by giving some examples of expansive actions.  An easy way to find expansiveness for an action of a finitely generated group is to prove that some of its generators is expansive. On the other hand, we can obtain expansive actions from non-expansive generators. 

 \begin{example}\label{DE}
We will consider an  action generated by two homeomorphisms $f$ and $g$.  Let $f$ be an irrational translation of the torus $\mathbb{T}^2$.  Let $T$ be a linear anosov map on $\mathbb{T}^2$ and let $p$ be its fixed point. Blow up $p$ in to a small disc $D$ and define $g$ on $T^2$ to be an extension of $T$ to $D$ as the identity map on $D$. It is clear that $f$ and $g$ are not expansive. Let $\Phi$ be the  action on $\T^2$ of the group generated by $f$ and $g$.

 We claim that $\phi$ is expansive. Notice that there exists $e>0$ such that any two distinct points in $\T^2\setminus D$ are $e$ apart at some time under the action of $g$. So we just need to consider the case when $x,y\in D$. To do that just notice that there exists $n>0$ such that at least one of $f^n(x)$ and $f^n(y)$ is outside of $D$. Now we can apply $g$ until $f^n(x)$ and $f^n(y)$ be $e$-apart. This proves that $e$ is an expansive constant for $\Phi$.
\end{example}
 
In \cite{Hur}  S. Hurder studied expansive actions induced by circle homeomorphisms. Next example is one of these actions and ilustrates that expansiveness can be obtainned in a way that none of its induced homeomorphisms is expansive. 

\begin{example}
Let us consider the homeomorphisms $f_1,f_2: S^1\to S^1$ such that $f_1$ is a irrational rotation and $f_2$ is a morse-smale homeomorphism with exactly two fixed points, a source $p_1$ and a sink $p_2$. Now let $\varPhi$ be the action on $S^1$ generated by $f_1$ and $f_2$. It is easy to see that $\varPhi$ is expansive. Indeed, fix $\de<d(p_1,p_2)$ and take $0<e<\frac{\de}{10}$. Notice $p_1$ and $p_2$ divide the circle in two distinct connected arcs and if two points $x,y$ are in distinct arcs, then they will be $e$-apart at some time by the iteration of $f_2$ or $f_2^{-1}$. If $x$ and $y$ are in the same arc, then we can apply the $f_1$ on until they belong to distinct arcs. Thus, we just need to apply $f_2$ or $f_2^{-1}$ until see  the desired separation. In this example, none of the homeomorphisms induced by $\varPhi$ can be expansive since they are defined on the circle.     
\end{example}

In this context, Corollary \ref{FG}  is obvious consequence of Theorem \ref{EntPG}, since any action of a finitely generated group, can be seen as the action of finitely generated pseudo-group.
  
Now we proceed to prove Theorem \ref{DC}. 
Let $\varPhi$ and $\varPsi$ be two $G$-actions on $M$. We denote $d_0$ for the $C^0$-distance on the space $\mathcal{A}^0(G,M)$, precisely, if $K$ is a finite generator of $G$, then 

$$d_0(\varPhi,\varPsi)=\max_{g_i\in K}\sup_{x\in M}\{d(\varPhi_{g_i}(x),\varPsi_{g_i}(x))\}  $$

Next definition is a generalization of the concept of discrete centralizer for homeomorphisms in \cite{W}.

\begin{definition}
We say that a continuous $G$-action $\varPhi$ has discrete $C^0$-centralizer if $\mathcal{C}^0(\varPhi)$ is a discrete subset of $\mathcal{A}^0(G,M)$ on the $C^0$-topology.
\end{definition}

\begin{proof}[Proof of Theorem \ref{DC}]
Let $\varPhi$ be an expansive $C^0$-action of $G$ on $M$ with expansiveness constant $\de>0$. Suppose that there are $\varPsi,\varPsi'\in \mathcal{C}^0(\varPhi)$ satisfying $d_0(\varPsi,\varPsi')\leq \de$.  If $\varPsi\neq \varPsi'$, we can find $x\in M$ and $g\in G$ such that $\varPsi_g(x)\neq \varPsi'_g(x)$. Now, for any $h\in G$ we have that $$ d(\varPhi_h(\varPsi_g(x),\varPhi_h(\varPsi'_g(x)))=d(\varPsi_g(\varPhi_h(x)),\varPsi'_g(\varPhi_h(x)))\leq \de $$ 

But this contradicts the expansiveness of $\varPhi$, thus $\varPsi=\varPsi'$ and $\mathcal{C}^0(\varPhi)$ is discrete. 
\end{proof}

\vspace{0.1in} 
\textbf{Acknowledgements:} \textit{The authors would like to thank professor Pablo Daniel Carrasco for his great help in the development of this work. His comments  were essential to improve the content of this work.
The authors also would like to thank the referees. Their valuable  comments and suggestions were very useful and helped the authors to improve the exposition of this work. }

\end{document}